\renewcommand*\subjclass[2][2000]{%
  \def\@subjclass{#2}%
  \@ifundefined{subjclassname@#1}{%
    \ClassWarning{\@classname}{Unknown edition (#1) of Mathematics
      Subject Classification; using '1991'.}%
  }{%
    \@xp\let\@xp\subjclassname\csname subjclassname@#1\endcsname
  }%
}
\newtheorem{theorem}{Theorem}[section]
\newtheorem{case}{Case}[section]
\newtheorem{lemma}{Lemma}[section]
\theoremstyle{definition}
\theoremstyle{remark}
\newtheorem{remark}{Remark}[section]
\numberwithin{equation}{section}
\newcounter{alphabet}
\newcounter{tmp}
\newcommand{\Ref}[1]{\@ifundefined{r@#1}{}{\setcounter{tmp}{\ref{#1}}\Alph{tmp}}}
\def\XXint#1#2#3{{\setbox0=\hbox{$#1{#2#3}{\int}$}
\vcenter{\hbox{$#2#3$}}\kern-.5\wd0}}
\begin{document}

\title{Optimal estimates for hyperbolic harmonic mappings in Hardy space}

\author{Jiaolong Chen}
\address{Jiaolong Chen, Key Laboratory of High Performance Computing and Stochastic Information Processing (HPCSIP)
(Ministry of Education of China), School of Mathematics and Statistics, Hunan Normal University, Changsha, Hunan 410081, People's Repulic of China}
\email{jiaolongchen@sina.com}

\author[David Kalaj]{David Kalaj}
\address{University of Montenegro, Faculty of Natural Sciences and
Mathematics, Cetinjski put b.b. 81000 Podgorica, Montenegro}
\email{davidkalaj@gmail.com}

\keywords{Hyperolic harmonic mappings, Hardy space, growth estimates}

\subjclass{Primary 31B05; Secondary 42B30}

\maketitle

\makeatletter\def\thefootnote{\@arabic\c@footnote}\makeatother

\begin{abstract}
Assume that $p\in(1,\infty]$ and $u=P_{h}[\phi]$,
where $\phi\in L^{p}(\mathbb{S}^{n-1},\mathbb{R}^{n})$.
Then for any $x\in \mathbb{B}^{n}$,
we obtain the sharp inequalities
$$
|u(x)|\leq
 \frac{\mathbf{C}_{q}^{\frac{1}{q}}(x)}{(1-|x|^2)^{\frac{n-1 }{p}}}
\|\phi\|_{L^{p}}
\quad\text{and}\quad
|u(x)|\leq
 \frac{\mathbf{C}_{q}^{\frac{1}{q}} }{(1-|x|^2)^{\frac{n-1 }{p}}}
\|\phi\|_{L^{p} }
$$
for some function $\mathbf{C}_{q}(x)$ and constant $\mathbf{C}_{q}$ in terms of Gauss hypergeometric and Gamma functions,
where $q$ is the conjugate of $p$.
This result generalize and extend some known result from harmonic mapping theory (\cite[Theorems 1.1 and 1.2]{kama} and
\cite[Proposition 6.16]{ABR}).
\end{abstract}

\maketitle

\section{Introduction}\label{intsec}
 For $n\geq2$, let $\mathbb{R}^{n}$ denote the $n$-dimensional Euclidean space.
We use $\mathbb{B}^{n}$ and $\mathbb{S}^{n-1}$ to denote the unit ball $\{x\in\mathbb{R}^{n}:|x|<1\}$ and the unit sphere $\{x\in\mathbb{R}^{n}:|x|=1\}$, respectively.
In particular, let
 $\overline{\mathbb{B}}^{n}=\mathbb{B}^{n} \cup \mathbb{S}^{n-1}$,
$\mathbb{R}^{2}=\mathbb{C}$
 and $\mathbb{B}^{2}=\mathbb{D}$.

A mapping $u=(u_1,\cdots,u_n)\in C^{2}(\mathbb{B}^{n}, \mathbb{R}^{n})$ is said to be {\it hyperbolic harmonic} if
 $$\Delta_{h}u=(\Delta_{h}u_{1}, \cdots,\Delta_{h}u_{n})=0,$$  that is,
 for each $j\in \{1,\cdots, n\}$, $u_j$ satisfies the hyperbolic Laplace equation
$$
\Delta_{h}u_{j} (x)=(1-|x|^2)^2\Delta u_{j}(x)+2(n-2)(1-|x|^2)\sum_{i=1}^{n} x_{i} \frac{\partial u_{j}}{\partial x_{i}}(x)=0,
$$
 where
 $\Delta$ denotes the usual Laplacian in $\mathbb{R}^{n}$.
For convenience, in the rest of this paper, we call $\Delta_{h}$ the {\it hyperbolic Laplacian operator}.

Obviously, when $n=2$, $\Delta_{h}u=(1-|x|^2)^2\Delta u$, and thus the class of hyperbolic harmonic mappings coincides with the usual class of harmonic mappings in $\mathbb{D}$.
 However, when $n\geq 3$, it is easily seen that the only
mappings annihilated by both $\Delta_{h}$ and $\Delta$ are the constant mappings (cf. \cite{sto2012}).
In this paper, we focus our investigations on the case when $n\geq 3$.

\subsection{Hardy space for hyperbolic harmonic mappings}
A measurable mapping $f: \mathbb{B}^{n}\rightarrow\mathbb{ R}^{n}$ belongs to the {\it Hardy space} $\mathcal{H}^{p}(\mathbb{B}^{n}, \mathbb{ R}^{n})$ with $p\in(0,\infty]$, if
 $M_{p}(r,f)$ exists for all $r\in (0,1)$ and $||f||_{\mathcal{H}^p}<\infty$, where
$$||f||_{\mathcal{H}^p}=\sup_{0<r<1} \big\{M_{p}(r,f)\big\}$$
and
$$\;\;M_{p}(r,f)=\begin{cases}
\displaystyle \;\left( \int_{\mathbb{S}^{n-1}} |f(r\xi)|^{p}d\sigma(\xi)\right)^{\frac{1}{p}} , & \text{ if } p\in (0,\infty),\\
\displaystyle \;\sup_{\xi\in \mathbb{S}^{n-1}} \big\{|f(r\xi)|\big\} , \;\;\;\;& \text{ if } p=\infty.
\end{cases}$$

\noindent Here and hereafter, $d \sigma$ always denotes the normalized surface measure on $\mathbb{S}^{n-1}$ so that $\sigma(\mathbb{S}^{n-1})=1$.

If $\phi\in L^{1}(\mathbb{S}^{n-1},\mathbb{R}^{n})$, we define the {\it invariant Poisson integral} or {\it Poisson-Szeg\"{o} integral} of $\phi$ in $\mathbb{B}^{n}$ by
\begin{eqnarray}\label{eq-1.1}
P_{h}[\phi](x)=\int_{\mathbb{S}} P_{h}(x,\zeta)\phi(\zeta)d\sigma(\zeta)
\end{eqnarray}
 (cf. \cite{sto2012} or \cite[Definition 5.3.2]{sto2016}), where
\begin{eqnarray}\label{eq-1.2}
 P_{h}(x,\zeta)=\left(\frac{1-|x|^2}{|x-\zeta|^{2}}\right)^{n-1}
\end{eqnarray}
is the {\it Poisson-Szeg\"{o} kernel} with respective to $\Delta_{h}$ satisfying
\begin{eqnarray}\label{eq-1.3}
\int_{\mathbb{S}} P_{h}(x,\zeta) d\sigma(\zeta)=1
\end{eqnarray}
(cf. \cite[Lemma 5.3.1(c)]{sto2016}).
Similarly, if $\mu$ is a finite signed Borel measure on $\mathbb{S}^{n-1}$,
then invariant Poisson integral of $\mu$ will be denoted by $P_{h}[\mu]$, that is,
$$
P_{h}[\mu](x)=\int_{\mathbb{S}} P_{h}(x,\zeta) d\mu(\zeta).
$$
Furthermore, both $P_{h}[\phi]$ and $P_{h}[\mu]$ are hyperbolic harmonic in
$\mathbb{B}^{n}$ (cf. \cite{bur, sto2016}).

It is  known that if $u$ is
a hyperbolic harmonic mapping and
$u\in \mathcal{H}^p(\mathbb{B}^n, \mathbb{R}^{n})$ with $p\in(1,\infty]$,
 then $u$ has the following integral representation
 (cf. \cite[Theorem 7.1.1(c)]{sto2016})
$$
u(x)= P_{h}[\phi](x),
$$
where
$\phi\in L^{p}(\mathbb{S}^{n-1},\mathbb{R}^{n})$
is the boundary value of $u$ and
$$
\|\phi\|_{L^p}= \|u\|_{\mathcal{H}^p}.
$$
If $\Delta_{h}u=0$ and $u\in \mathcal{H}^1(\mathbb{B}^n, \mathbb{R})$, then $u$ has the representation $u =P_{h}[\mu]$,
where $\mu$ is a signed Borel measure in $\mathbb{B}^n$.
Further, the similar arguments as \cite[Page 118]{ABR} show that
$\|u\|_{\mathcal{H}^1}=||\mu||$, where $||\mu||$ is the total variation of $\mu$ on $\mathbb{S}^{n-1}$.

Let $H^{p}(\mathbb{D},\mathbb{C})$  denote the Hardy space which consists of analytic functions from $\mathbb{D}$ into $\mathbb{C}$, while the analogous space of harmonic mappings from
$\mathbb{B}^{n}$ into $\mathbb{R}^{n}$ is denoted by $h^{p}(\mathbb{B}^{n}, \mathbb{ R}^{n})$.
In \cite[Lemma 5.1.1]{pav}, Pavlovi\'{c} obtained a growth estimate
for functions in $H^{p}(\mathbb{D},\mathbb{C})$:
If $f\in H^{p}(\mathbb{D},\mathbb{C})$ with $p\in(0,\infty]$,
then for any $x\in \mathbb{D}$,
 $$
 |f(z)|\leq (1-|z|^{2})^{-\frac{1}{p}}||f||_{H^{p}}.
 $$
For the harmonic case, by \cite[Proposition 6.16]{ABR}, we see that for any $f\in h^{p}(\mathbb{B}^{n}, \mathbb{ R}^{n})$ with $p\in[1,\infty)$,
$$
|f(x)|\leq
\left( \frac{1+|x|}{(1-|x|)^{n-1}}\right)^{ \frac{1}{p}}
||f||_{h^{p}}
$$
in $\mathbb{B}^{n}$.
Further, if $p=2$, then $h^{2}(\mathbb{B}^{n}, \mathbb{ R}^{n})$ is a Hilbert space.
In this case, we have the following slightly better point estimate (cf. \cite[Proposition 6.23]{ABR})
$$
|f(x)|\leq
\sqrt{ \frac{1+|x|^{2}}{(1-|x|^{2})^{n-1}}} \;
||f||_{h^{2}}.
$$
In the recent paper \cite{kama},  Kalaj and Markovi\'{c}
studied the pointwise estimates of mappings in $h^{p}(\mathbb{B}^{n}, \mathbb{ R}^{n})$, where $p\in(1,\infty]$.
They obtained a sharp function
 $\mathcal{C}_{p}(x)$ and a sharp constant $\mathcal{C}_{p}$ for the following two inequalities
$$
|u(x)|\leq  \frac{\mathcal{C}_{p}(x)}{(1-|x|^{2})^{\frac{n-1}{p}}}
||u||_{h^{p}}
\quad\text{and}\quad
|u(x)|\leq  \frac{\mathcal{C}_{p}}{(1-|x|^{2})^{\frac{n-1}{p}}}
||u||_{h^{p}}.
$$

 In this paper, we will establish a counterpart of
 \cite[Theorems 1.1 and 1.2]{kama} in the setting of hyperbolic harmonic mappings in $\mathcal{H}^p(\mathbb{B}^{n},\mathbb{R}^{n})$.
In order to state our main results, we need to introduce some notations.

For any $a\in\mathbb{R}$ and $k\in \mathbb{N}=\{0,1,2,\ldots\}$,
let $(a)_{k}$ denote the {\it factorial function} with $(a)_{0}=1$
and $(a)_{k}=a(a+1)\ldots (a+k-1)$.
For  $x\in \mathbb{R}$,
we define the {\it Gauss hypergeometric function} or {\it hypergeometric series} by
$$
\;_{2}F_{1}(a,b;c;x)
=\sum_{k=0}^{\infty}\frac{(a)_{k}(b)_{k}}{k!(c)_{k}}x^{k},
$$
where $a,b\in \mathbb{R}$  and $c$ is neither zero nor a negative integer (cf. \cite[Page 46]{rain}).
If $c-a-b>0$, then the series $\;_{2}F_{1}(a,b;c;x)$ converges absolutely for all $|x|\leq 1$ (cf. \cite[Section 31]{rain}).
If $c>b>0$ and $|x|<1$, then
(cf. \cite[Page 47]{rain})
\begin{eqnarray}\label{eq-1.4}
\;_{2}F_{1}(a,b;c;x)
= \frac{\Gamma(c)}{\Gamma(b)\Gamma(c-b)}
\int_{0}^{1}\frac{t^{b-1}(1-t)^{c-b-1}}{(1-tx)^{a}}dt,
\end{eqnarray}
where $\Gamma$ is the Gamma function.
It is easy to check the following formula
\begin{eqnarray}\label{eq-1.5}
\frac{d}{dx}\;_{2}F_{1}(a,b;c;x)
= \frac{ab}{c}
 \;_{2}F_{1}(a+1,b+1;c+1;x).
\end{eqnarray}
If $2b$ is neither zero nor a negative integer, and if both $|x|<1$ and
$|4x(1+x)^{-2}|<1$, then we have the following quadratic transformation (cf. \cite[Page 65]{rain})
\begin{eqnarray}\label{eq-1.6}
\;_{2}F_{1}\left(a,b;2b;\frac{4x}{(1+x)^{2}}\right)
= (1+x)^{2a}
\;_{2}F_{1}\left(a,a-b+\frac{1}{2};b+\frac{1}{2};x^{2}\right).
\end{eqnarray}

The following are our results.
Note that a different form of growth estimate
for hyperbolic harmonic mappings in $\mathcal{H}^{p}(\mathbb{B}^{n},\mathbb{R}^{n})$
was proved in \cite[Theorem 1.1]{chenkalaj}.

\begin{theorem}\label{thm-1.1}
Let $p\in(1,\infty]$ and $q$ be its conjugate.
If $u=P_{h}[\phi]$ and $\phi\in L^{p}(\mathbb{S}^{n-1},\mathbb{R}^{n})$,
then for any $x\in\mathbb{B}^{n}$, we have the following sharp
inequality:
\begin{eqnarray*}
|u(x)|\leq
 \frac{\mathbf{C}_{q}^{\frac{1}{q}}(x)}{(1-|x|^2)^{\frac{n-1 }{p}}}
\|\phi\|_{L^{p}},
\end{eqnarray*}
 where
$$
\mathbf{C}_{q}(x)
=
\;_{2}F_{1}\left(-(n-1)(q-1),\frac{n}{2}+q-nq;\frac{n}{2};|x|^{2}\right).
$$

\end{theorem}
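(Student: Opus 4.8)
\section*{Proof proposal}

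The plan is to start from the representation $u(x)=\int_{\mathbb{S}^{n-1}}P_h(x,\zeta)\phi(\zeta)\,d\sigma(\zeta)$ and apply Hölder's inequality with exponents $p$ and $q$, which gives
$$|u(x)|\le\Big(\int_{\mathbb{S}^{n-1}}P_h(x,\zeta)^{q}\,d\sigma(\zeta)\Big)^{1/q}\|\phi\|_{L^{p}}.$$
Thus everything reduces to evaluating $J(x):=\int_{\mathbb{S}^{n-1}}P_h(x,\zeta)^{q}\,d\sigma(\zeta)=(1-|x|^{2})^{(n-1)q}\int_{\mathbb{S}^{n-1}}|x-\zeta|^{-2(n-1)q}\,d\sigma(\zeta)$ in closed form, and to checking that $J(x)=(1-|x|^{2})^{-(n-1)q/p}\,\mathbf{C}_{q}(x)$; once this is done, raising to the power $1/q$ and using $q-1=q/p$ (so that $\tfrac{1}{q}\cdot\tfrac{(n-1)q}{p}=\tfrac{n-1}{p}$) yields precisely the asserted inequality.

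To evaluate the spherical integral I would use rotational invariance to take $x=|x|e_{1}$ and collapse it to the one-dimensional integral $c_{n}\int_{-1}^{1}(1-t^{2})^{(n-3)/2}(1-2|x|t+|x|^{2})^{-(n-1)q}\,dt$, where $c_{n}=\Gamma(n/2)/(\Gamma(1/2)\Gamma((n-1)/2))$ and the integral converges since $|x-\zeta|\ge 1-|x|>0$. The substitution $s=(1+t)/2$ turns the denominator into $(1+|x|)^{2(n-1)q}\big(1-\tfrac{4|x|}{(1+|x|)^{2}}s\big)^{(n-1)q}$ with $\tfrac{4|x|}{(1+|x|)^{2}}<1$ for $|x|<1$, so that formula \eqref{eq-1.4} (with $b=\tfrac{n-1}{2}$, $c=n-1$, $a=(n-1)q$) applies and produces a multiple of $\;_{2}F_{1}\big((n-1)q,\tfrac{n-1}{2};n-1;\tfrac{4|x|}{(1+|x|)^{2}}\big)$; Legendre's duplication formula for $\Gamma$ shows that the accumulated constants (including $c_{n}$) collapse to $1$. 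The quadratic transformation \eqref{eq-1.6} with $a=(n-1)q$, $b=\tfrac{n-1}{2}$ then converts the argument to $|x|^{2}$, giving $\int_{\mathbb{S}^{n-1}}|x-\zeta|^{-2(n-1)q}\,d\sigma(\zeta)=\;_{2}F_{1}\big((n-1)q,(n-1)q-\tfrac n2+1;\tfrac n2;|x|^{2}\big)$. Finally, the classical Euler transformation $\;_{2}F_{1}(a,b;c;z)=(1-z)^{c-a-b}\;_{2}F_{1}(c-a,c-b;c;z)$ turns the parameter pair into $\big(-(n-1)(q-1),\tfrac n2+q-nq\big)$ and contributes the factor $(1-|x|^{2})^{c-a-b}=(1-|x|^{2})^{-(n-1)(2q-1)}$; multiplying by the prefactor $(1-|x|^{2})^{(n-1)q}$ gives $J(x)=(1-|x|^{2})^{-(n-1)(q-1)}\mathbf{C}_{q}(x)=(1-|x|^{2})^{-(n-1)q/p}\mathbf{C}_{q}(x)$, as required.

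For sharpness, fix $x\in\mathbb{B}^{n}$. When $1<p<\infty$ I would test with $\phi(\zeta)=P_h(x,\zeta)^{q-1}e$ for a fixed unit vector $e\in\mathbb{R}^{n}$; this $\phi$ lies in $L^{p}(\mathbb{S}^{n-1},\mathbb{R}^{n})$ because $P_h(x,\cdot)$ is bounded on $\mathbb{S}^{n-1}$ for fixed $x$, and $u=P_h[\phi]$ is hyperbolic harmonic and belongs to $\mathcal{H}^{p}$. Since $(q-1)p=q$, one has $u(x)=e\int_{\mathbb{S}^{n-1}}P_h(x,\zeta)^{q}\,d\sigma(\zeta)$, hence $|u(x)|=J(x)$ while $\|\phi\|_{L^{p}}=J(x)^{1/p}$, so $|u(x)|/\|\phi\|_{L^{p}}=J(x)^{1/q}$ and the inequality is attained. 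When $p=\infty$ (so $q=1$ and $\mathbf{C}_{1}(x)=\;_{2}F_{1}(0,\cdot\,;\tfrac n2;|x|^{2})=1$), take $\phi\equiv e$; then $u(x)=e$ by \eqref{eq-1.3}, and both sides of the inequality equal $\|\phi\|_{L^{\infty}}$.

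The main obstacle is the chain of hypergeometric manipulations in the second paragraph: one must invoke the correct integral representation and the correct quadratic and Euler transformations, verify in each step the parameter and argument constraints ($c>b>0$, $2b$ not a nonpositive integer, $|z|<1$ and $|4z(1+z)^{-2}|<1$, plus convergence of the spherical integral for $|x|<1$), and check that the Gamma-function constants telescope to $1$. Everything else---Hölder's inequality, the rotational reduction, and the extremal test functions---is routine.
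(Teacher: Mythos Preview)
Your argument is correct. The H\"older step, the rotational reduction, the hypergeometric evaluation via \eqref{eq-1.4} and the quadratic transformation \eqref{eq-1.6}, the use of Euler's transformation $\,_{2}F_{1}(a,b;c;z)=(1-z)^{c-a-b}\,_{2}F_{1}(c-a,c-b;c;z)$, and the extremal choice $\phi=P_{h}(x,\cdot)^{q-1}e$ all check out; the parameter constraints you flag are indeed satisfied for $|x|<1$ and $n\ge 2$, and the Gamma constants do telescope via Legendre duplication.

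The paper takes a slightly different route in evaluating $\int_{\mathbb{S}^{n-1}}P_{h}(x,\zeta)^{q}\,d\sigma(\zeta)$. Instead of integrating the singular kernel $|x-\zeta|^{-2(n-1)q}$ directly and then invoking Euler's transformation, it first performs the M\"obius-type change of variables $\zeta=T_{x}(\eta)$ on the sphere (Lemma~\ref{lem-2.1}), which converts the integral into
\[
(1-|x|^{2})^{-(n-1)(q-1)}\int_{\mathbb{S}^{n-1}}|x-\eta|^{2(n-1)(q-1)}\,d\sigma(\eta),
\]
with a \emph{positive} exponent. The remaining integral is then identified (Lemma~\ref{lem-2.2}) with $\mathbf{C}_{q}(x)$ via the same spherical reduction and the same pair \eqref{eq-1.4}--\eqref{eq-1.6}, but now with $a=-(n-1)(q-1)$, so that the target hypergeometric parameters appear immediately and no Euler transformation is needed. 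In effect the M\"obius change of variables does geometrically what the Euler identity does analytically. Your approach trades the M\"obius lemma for one additional standard hypergeometric identity; the paper's approach is perhaps cleaner in that it works with a bounded integrand (useful also for the supremum in Theorem~\ref{thm-1.2}), while yours is more self-contained since it avoids importing the sphere-to-sphere change of variables formula.
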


\begin{theorem}\label{thm-1.2}
Let $p\in(1,\infty]$ and $q$ be its conjugate.
If $u=P_{h}[\phi]$ and $\phi\in L^{p}(\mathbb{S}^{n-1},\mathbb{R}^{n})$,
then for any $x\in\mathbb{B}^{n}$, we have the following sharp
inequality:
\begin{eqnarray*}
|u(x)|\leq
 \frac{\mathbf{C}_{q}^{\frac{1}{q}} }{(1-|x|^2)^{\frac{n-1 }{p}}}
\|\phi\|_{L^{p}},
\end{eqnarray*}
 where
$$
\mathbf{C}_{q}
=
\;_{2}F_{1}\left(-(n-1)(q-1),\frac{n}{2}+q-nq;\frac{n}{2};1\right).
$$
\end{theorem}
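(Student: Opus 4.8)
The plan is to deduce Theorem~\ref{thm-1.2} from Theorem~\ref{thm-1.1}. As in the proof of Theorem~\ref{thm-1.1} one has
\[
|u(x)|\le\Bigl(\int_{\mathbb{S}^{n-1}}P_{h}(x,\zeta)^{q}\,d\sigma(\zeta)\Bigr)^{1/q}\|\phi\|_{L^{p}}
=\frac{\mathbf{C}_{q}(x)^{1/q}}{(1-|x|^{2})^{(n-1)/p}}\|\phi\|_{L^{p}},
\]
with the H\"older step attained for a suitable $\phi$ depending on $x$ (e.g.\ $\phi(\zeta)=P_{h}(x,\zeta)^{q-1}\nu$ with $\nu$ a fixed unit vector), so it suffices to prove $\mathbf{C}_{q}(x)\le\mathbf{C}_{q}$ for all $x\in\mathbb{B}^{n}$. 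When $q=1$ this is trivial, since then $\mathbf{C}_{1}(x)\equiv1$ by \eqref{eq-1.3}; so assume $q>1$. Writing $a=-(n-1)(q-1)$, $b=\tfrac n2+q-nq$, $c=\tfrac n2$, one checks $c-a-b=(n-1)(2q-1)>0$, so the series $\;_{2}F_{1}(a,b;c;1)$ converges absolutely and $t\mapsto\;_{2}F_{1}(a,b;c;t)$ is continuous on $[0,1]$; hence $\lim_{|x|\to1}\mathbf{C}_{q}(x)=\;_{2}F_{1}(a,b;c;1)=\mathbf{C}_{q}$ automatically, and everything reduces to showing that $r\mapsto\mathbf{C}_{q}(re)$ is non-decreasing on $[0,1)$.

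The key step is the integral identity
\[
\mathbf{C}_{q}(x)=\int_{\mathbb{S}^{n-1}}|x+\zeta|^{2(n-1)(q-1)}\,d\sigma(\zeta),\qquad x\in\mathbb{B}^{n}.
\]
To obtain it I would either use the classical spherical-average formula $\int_{\mathbb{S}^{n-1}}(1-2r\langle e,\zeta\rangle+r^{2})^{-s}\,d\sigma(\zeta)=\;_{2}F_{1}\!\left(s,\,s-\tfrac n2+1;\,\tfrac n2;\,r^{2}\right)$ with $s=-(n-1)(q-1)$ (after replacing $\zeta$ by $-\zeta$, so that $1+2r\langle e,\zeta\rangle+r^{2}=|re+\zeta|^{2}$, and noting $s-\tfrac n2+1=b$), or, if one prefers to stay within the formulas of the introduction, apply the quadratic transformation \eqref{eq-1.6} with $b=\tfrac{n-1}{2}$ and $a=-(n-1)(q-1)$ to get $\mathbf{C}_{q}(x)=(1+|x|)^{2(n-1)(q-1)}\,\;_{2}F_{1}\!\left(-(n-1)(q-1),\tfrac{n-1}{2};n-1;\tfrac{4|x|}{(1+|x|)^{2}}\right)$, then rewrite the last hypergeometric function via the Euler integral \eqref{eq-1.4} (valid since $n-1>\tfrac{n-1}{2}>0$), simplify the integrand to $((1+|x|)^{2}-4|x|t)^{(n-1)(q-1)}$, substitute $t=\tfrac{1-s}{2}$, and recognize the resulting $s$-integral as a spherical average, the overall constant collapsing to $1$ by the Legendre duplication formula for $\Gamma$.

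With the identity in hand, set $\lambda=2(n-1)(q-1)\ge0$. Since $n\ge3$, the function $y\mapsto|y|^{\lambda}$ is subharmonic on $\mathbb{R}^{n}$, hence so is $y\mapsto|y+\zeta_{0}|^{\lambda}$ for each fixed $\zeta_{0}\in\mathbb{S}^{n-1}$. Using the rotational invariance of $d\sigma$ one checks that $\mathbf{C}_{q}(x)$ equals the average of $y\mapsto|y+\zeta_{0}|^{\lambda}$ over the sphere $\{|y|=|x|\}$, and spherical means of a subharmonic function are non-decreasing in the radius; therefore $r\mapsto\mathbf{C}_{q}(re)$ is non-decreasing, whence $\mathbf{C}_{q}(x)\le\lim_{|x|\to1}\mathbf{C}_{q}(x)=\mathbf{C}_{q}$. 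Combined with Theorem~\ref{thm-1.1} this yields the inequality. For sharpness, given $\varepsilon>0$ the continuity of $\mathbf{C}_{q}(\cdot)$ lets one choose $x$ with $\mathbf{C}_{q}(x)^{1/q}>\mathbf{C}_{q}^{1/q}-\varepsilon$; the extremal $\phi$ from Theorem~\ref{thm-1.1} for that $x$ then makes $|u(x)|(1-|x|^{2})^{(n-1)/p}/\|\phi\|_{L^{p}}=\mathbf{C}_{q}(x)^{1/q}>\mathbf{C}_{q}^{1/q}-\varepsilon$, so $\mathbf{C}_{q}^{1/q}$ cannot be lowered.

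\textbf{Main obstacle.} The inequality $\mathbf{C}_{q}(x)\le\mathbf{C}_{q}$ is the only real content: it is not visible from the defining series (whose coefficients change sign) nor term-by-term from the integral representations coming out of \eqref{eq-1.4}--\eqref{eq-1.6} (for instance $((1+|x|)^{2}-4|x|t)^{(n-1)(q-1)}$ need not be monotone in $|x|$). What makes it transparent is recognizing $\mathbf{C}_{q}(x)$ as a spherical mean of the subharmonic kernel $|\cdot|^{2(n-1)(q-1)}$, and the one genuinely computational point is pushing the transformation chain \eqref{eq-1.6}$\to$\eqref{eq-1.4}$\to$duplication formula through to that identity with the constant normalized to $1$.
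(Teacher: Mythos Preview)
Your argument is correct and, up to the monotonicity step, tracks the paper closely: the H\"older inequality and the identification $\mathbf{C}_{q}(x)=\int_{\mathbb{S}^{n-1}}|x-\eta|^{2(n-1)(q-1)}\,d\sigma(\eta)$ are exactly the paper's Lemmas~\ref{lem-2.1} and~\ref{lem-2.2} (your derivation via \eqref{eq-1.6}, \eqref{eq-1.4} and the duplication formula is the paper's computation run in reverse).

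The genuine difference is in how you establish $\sup_{x}\mathbf{C}_{q}(x)=\mathbf{C}_{q}(e_{n})$ (the paper's Lemma~\ref{lem-2.3}). The paper argues by cases on $q$: for $q\in(1,1+\tfrac1{n-1})$ it differentiates the hypergeometric function via \eqref{eq-1.5} and checks the sign of the derivative through the Euler integral \eqref{eq-1.4}; for $q\ge 1+\tfrac1{n-1}$ it differentiates the spherical integral directly and uses $(n-1)(q-1)-1\ge 0$ to control the sign of the $\cos\theta$ term. Your route is more conceptual and case-free: once $\mathbf{C}_{q}(x)$ is recognized as the spherical mean of the subharmonic function $y\mapsto|y+\zeta_{0}|^{2(n-1)(q-1)}$ (note that $y+\zeta_{0}\neq 0$ for $|y|<1$, so there is no issue at the origin), monotonicity in $|x|$ is immediate and works uniformly in $q>1$. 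The paper's approach has the advantage of being entirely self-contained within special-function identities already set up in the introduction; your approach is shorter and explains \emph{why} the split at $q=1+\tfrac1{n-1}$ in the paper is an artifact of the method rather than of the problem. Your sharpness argument (approach the boundary and use the extremal $\phi$ from Theorem~\ref{thm-1.1}) is the natural one; the paper leaves this implicit.
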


\begin{remark}\label{re-1.1}
If $u=P_{h}[\phi]$ and $\phi\in L^{\infty}(\mathbb{S}^{n-1},\mathbb{R}^{n})$, then by \eqref{eq-1.1} and \eqref{eq-1.3}, it is easy to see that for any $x\in \mathbb{B}^{n}$,
\begin{eqnarray}\label{eq-1.7}
|u(x)|\leq
\int_{\mathbb{S}^{n-1}}P_{h}(x,\zeta)d\sigma(\zeta)\cdot\|\phi\|_{L^{\infty}}
=  \|\phi\|_{L^{\infty}}.
\end{eqnarray}
By letting $\phi\equiv C$ on $\mathbb{S}^{n-1}$, we see that  the sharpness of \eqref{eq-1.7} follows,
where $C$ is a constant.
In this case, $p=\infty$, $q=1$ and $\mathbf{C}_{1}(x)\equiv\mathbf{C}_{1}\equiv1$.

If $u=P_{h}[\phi]$ and $\phi\in L^{1}(\mathbb{S}^{n-1},\mathbb{R}^{n})$, then by \eqref{eq-1.1} and \eqref{eq-1.2}, we obtain
\begin{eqnarray}\label{eq-1.8}
|u(x)|\leq \max_{\zeta\in \mathbb{S}^{n-1}}P_{h}(x,\zeta)\cdot\|\phi\|_{L^{1}}
= \frac{(1+|x|)^{2(n-1)} }{( 1-|x|^{2} )^{n-1}} \|\phi\|_{L^{1}}
\end{eqnarray}
in $\mathbb{B}^{n}$.
In the following, we show the sharpness of \eqref{eq-1.8}.
For any $x_{0}=|x_{0}|\eta_{0} \in \mathbb{B}^{n}$ and $i\in \mathbb{Z}^{+}$,
define
$$
\phi_{i}(\zeta)=\frac{ \chi_{\Omega_{i}}(\zeta)}{ || \chi_{\Omega_{i}} ||_{L^{1}}}
$$
on $\mathbb{S}^{n-1}$
and
$u_{i} =P_{h}[\phi_{i}] $ in $\mathbb{B}^{n}$,
where
$\Omega_{i}=\{\zeta\in\mathbb{S}^{n-1}:|\zeta-\eta_{0}|\leq\frac{1}{i}\}$
and $\chi$ is the indicator function.
Then for any $i\in \mathbb{Z}^{+}$ and $x\in \mathbb{B}^{n}$,
$ ||\phi_{i}||_{L^{1} }=1$ and
\begin{eqnarray} \label{eq-1.9}
 u_{i}(x)
 = \int_{\mathbb{S}^{n-1}}
 P_{h}(x,\zeta)
 \frac{ \chi_{\Omega_{i}}(\zeta)}{ || \chi_{\Omega_{i}} ||_{L^{1}}}
  d\sigma(\zeta).
  \end{eqnarray}
For fixed $x\in \mathbb{B}^{n}$,
by the definition of $\chi_{\Omega_{i}}$, we see that
$$
\lim_{i\rightarrow\infty}
\big| P_{h}(x,\zeta)  -
P_{h}(x,\eta_{0}) \big|\cdot
 \chi_{\Omega_{i}}(\zeta)
 =0.
$$
Then for any $\varepsilon>0$, there exists a positive integer $m_{1}=m_{1}(\varepsilon,x,\eta_{0})$ such that for any $i\geq m_{1} $,
$$
\big| P_{h}(x,\zeta)  -
P_{h}(x,\eta_{0}) \big|\cdot
 \chi_{\Omega_{i}}(\zeta)<\varepsilon,
$$
where the notation $m_{1}=m_{1}(\varepsilon,x, \eta_{0})$ means that the constant $m_{1}$ depends only on the quantities $\varepsilon$, $x$ and $\eta_{0}$.
Since
$\int_{\mathbb{S}^{n-1}}
\frac{ \chi_{\Omega_{i}}(\zeta)}{||\chi_{\Omega_{i}}||_{L^{1}}}
d\sigma(\zeta)=1$, then for any $i\geq m_{1} $,
\begin{eqnarray*}
&\;\;&\left| \int_{\mathbb{S}^{n-1}}
 P_{h}(x,\zeta)
 \frac{ \chi_{\Omega_{i}}(\zeta)}{ || \chi_{\Omega_{i}} ||_{L^{1}}}
  d\sigma(\zeta)
 -  P_{h}(x,\eta_{0}) \right|\\
&\leq&\int_{\mathbb{S}^{n-1}}
  \left|\big( P_{h}(x,\zeta)  -
P_{h}(x,\eta_{0})\big)\cdot\chi_{\Omega_{i}}(\zeta)\right|
 \cdot
 \frac{ \chi_{\Omega_{i}}(\zeta)}{ || \chi_{\Omega_{i}} ||_{L^{1}}}d\sigma(\zeta)
 \leq\varepsilon.
\end{eqnarray*}
This, together with \eqref{eq-1.9}, means
\begin{eqnarray} \label{eq-1.10}
\lim_{i\rightarrow\infty} u_{i}(x)
=\lim_{i\rightarrow\infty} \int_{\mathbb{S}^{n-1}}
 P_{h}(x,\zeta)
 \frac{ \chi_{\Omega_{i}}(\zeta)}{ || \chi_{\Omega_{i}} ||_{L^{1}}}
  d\sigma(\zeta)
 = P_{h}(x,\eta_{0}).
\end{eqnarray}
By replacing $x$ with $x_{0}$ in \eqref{eq-1.10}, we obtain
$$
\lim_{i\rightarrow\infty} u_{i}(x_{0})
 = P_{h}(x_{0},\eta_{0})
 = \frac{(1+|x_{0}|)^{2(n-1)} }{( 1-|x_{0}|^{2} )^{n-1}} \lim_{i\rightarrow\infty}
 \| \phi_{i}\|_{L^{1}},
$$
and so, \eqref{eq-1.8} is sharp.
\end{remark}

\section{Proofs of the main results}\label{sec-2}

The aim of this section is to prove Theorems \ref{thm-1.1} and \ref{thm-1.2} when $p\in(1,\infty)$.
The cases $p=1$ and $p=\infty$ are already considered in Remark \ref{re-1.1}.
Before the proofs of them, we need some preparation which consists of three lemmas.
The first one reads as follows.

\begin{lemma}\label{lem-2.1}
For any $q\in(1,\infty)$ and $x\in\mathbb{B}^{n}$,
we have the following sharp inequality
\begin{eqnarray*}
|u(x)|
 \leq
 \frac{1}{(1-|x|^2)^{\frac{(n-1)(q-1)}{q}}}
\left( \int_{\mathbb{S}^{n-1}}  |x-\eta|^{2(n-1)(q-1)}   d\sigma(\eta)\right)^{\frac{1}{q}}
\|\phi\|_{L^{p}}.
\end{eqnarray*}
\end{lemma}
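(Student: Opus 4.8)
The plan is to bound $|u(x)|$ by Hölder's inequality by $\|\phi\|_{L^{p}}$ times the $L^{q}$-norm of the kernel $P_h(x,\cdot)$, and then to rewrite that $L^{q}$-norm in the form appearing in the statement by pushing the sphere integral forward under a Möbius automorphism of $\mathbb{B}^{n}$; the sharpness will be the usual extremal computation for a bounded linear functional on $L^{p}$. Concretely, from $u=P_h[\phi]$, the positivity of $P_h$, and Hölder's inequality with the conjugate pair $p,q$,
\[
|u(x)|\le\int_{\mathbb{S}^{n-1}}P_h(x,\zeta)\,|\phi(\zeta)|\,d\sigma(\zeta)\le\Big(\int_{\mathbb{S}^{n-1}}P_h(x,\zeta)^{q}\,d\sigma(\zeta)\Big)^{1/q}\|\phi\|_{L^{p}},
\]
so it remains to identify $\int_{\mathbb{S}^{n-1}}P_h(x,\zeta)^{q}\,d\sigma(\zeta)$ with $(1-|x|^{2})^{-(n-1)(q-1)}\int_{\mathbb{S}^{n-1}}|x-\eta|^{2(n-1)(q-1)}\,d\sigma(\eta)$ and then to check equality.

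For this identification I would use the Möbius involution $\varphi_x$ of $\mathbb{B}^{n}$ interchanging $0$ and $x$. The one input needed is that the tangential Jacobian of $\varphi_x|_{\mathbb{S}^{n-1}}$ is the Poisson--Szegő kernel, $|J_{\varphi_x}(\zeta)|=P_h(x,\zeta)$ for $\zeta\in\mathbb{S}^{n-1}$; this is elementary from the explicit formula for $\varphi_x$ and is exactly the reason \eqref{eq-1.3} holds (cf. \cite{sto2016}). Since $\varphi_x\circ\varphi_x=\mathrm{id}$, the chain rule for Jacobians then yields $P_h(x,\varphi_x(\eta))\,P_h(x,\eta)=|J_{\varphi_x}(\varphi_x(\eta))|\,|J_{\varphi_x}(\eta)|=1$, equivalently $|x-\varphi_x(\eta)|\,|x-\eta|=1-|x|^{2}$. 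Substituting $\zeta=\varphi_x(\eta)$, so that $d\sigma(\zeta)=P_h(x,\eta)\,d\sigma(\eta)$, gives
\[
\int_{\mathbb{S}^{n-1}}P_h(x,\zeta)^{q}\,d\sigma(\zeta)=\int_{\mathbb{S}^{n-1}}P_h(x,\eta)^{1-q}\,d\sigma(\eta)=\frac{1}{(1-|x|^{2})^{(n-1)(q-1)}}\int_{\mathbb{S}^{n-1}}|x-\eta|^{2(n-1)(q-1)}\,d\sigma(\eta),
\]
where the last equality uses $P_h(x,\eta)^{1-q}=|x-\eta|^{2(n-1)(q-1)}(1-|x|^{2})^{-(n-1)(q-1)}$. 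Combined with the Hölder bound this establishes the inequality after taking $q$-th roots.

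For sharpness I would fix a unit vector $e\in\mathbb{S}^{n-1}$ and take $\phi_{0}(\zeta)=P_h(x,\zeta)^{q-1}e$. Writing $I:=\int_{\mathbb{S}^{n-1}}P_h(x,\zeta)^{q}\,d\sigma(\zeta)$, which is finite because, by the substitution above, its integrand transforms into the bounded function $|x-\eta|^{2(n-1)(q-1)}$, and using $(q-1)p=q$, one gets $\phi_{0}\in L^{p}(\mathbb{S}^{n-1},\mathbb{R}^{n})$ with $\|\phi_{0}\|_{L^{p}}=I^{1/p}$ and $P_h[\phi_{0}](x)=I\,e$, whence $|P_h[\phi_{0}](x)|=I=I^{1/q}\|\phi_{0}\|_{L^{p}}$, i.e. equality in the asserted estimate.

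The only genuinely delicate point is the Möbius change of variables: picking the correct involution $\varphi_x$, justifying the tangential Jacobian formula $|J_{\varphi_x}|=P_h(x,\cdot)$, and keeping the orientation of the substitution straight; everything else is mechanical. If one prefers to avoid automorphisms, the needed identity for $\int_{\mathbb{S}^{n-1}}P_h(x,\zeta)^{q}\,d\sigma(\zeta)$ can instead be obtained by expanding $|x-\zeta|^{-2q(n-1)}$ and $|x-\eta|^{2(n-1)(q-1)}$ into zonal-harmonic (Gegenbauer) series and comparing coefficients, but that route is longer and essentially re-derives the same Möbius identity in coordinates.
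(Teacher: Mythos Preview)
Your proof is correct and follows essentially the same route as the paper: H\"older's inequality, then the M\"obius change of variables on $\mathbb{S}^{n-1}$ (your involution $\varphi_x$ restricted to the sphere coincides with the paper's map $T_x(\eta)=x-(1-|x|^2)\frac{\eta-x}{|\eta-x|^2}$, and the identity $|x-\zeta|\,|x-\eta|=1-|x|^{2}$ you derive from the involutive property is exactly the paper's $|x-\zeta|=(1-|x|^{2})/|\eta-x|$), followed by the same extremal $\phi_0=P_h(x,\cdot)^{q-1}e=P_h(x,\cdot)^{q/p}e$ for sharpness. The only cosmetic difference is that you invoke the chain rule for the involution whereas the paper quotes the distance and Jacobian formulas for $T_x$ directly.
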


\begin{proof}
Let $p$ be the conjugate of $q$, $ \phi \in L^{p}(\mathbb{S}^{n-1},\mathbb{R}^{n})$
 and $u=P_{h}[\phi]$ in $\mathbb{B}^{n}$,
 where
$p\in(1,\infty)$.
For any $x\in\mathbb{B}^{n}$, by \eqref{eq-1.1} and H\"{o}lder's inequality, we have
 \begin{eqnarray}\label{eq-2.1}
|u(x)|
 \leq
 \left(\int_{\mathbb{S}^{n-1}}
 P^{ q}_{h}(x,\zeta)d\sigma(\zeta)\right)^{\frac{1}{q}}
\|\phi\|_{L^{p}}.
\end{eqnarray}

Next we show the sharpness of \eqref{eq-2.1}.
For any $x\in \mathbb{B}^{n}$,
define
$\phi_{*}(\zeta)
= P_{h}^{q/p}(x,\zeta)$
on $\mathbb{S}^{n-1}$ and $u_{*} = P_{h}[\phi_{*}] $ in $\mathbb{B}^{n}$.
 Then we have
$$
   u_{*}(x)
   = \left(\int_{\mathbb{S}^{n-1}} P^{q}_{h}(x,\zeta) d\sigma(\zeta)\right)^{\frac{1}{q}}\|\phi_{*}\|_{L^{p}},
$$
 which means that \eqref{eq-2.1} is sharp for any $x\in \mathbb{B}^{n}$.

In the following, we will calculate the integral above.
 For any $\eta\in \mathbb{S}^{n-1}$ and $x\in \mathbb{B}^{n}$,
let $\zeta=T_{x}(\eta)$, where
$$T_{x}(\eta)=x-(1-|x|^{2})\frac{\eta-x }{|\eta-x|^{2}}.$$
Then $\zeta=T_{x}(\eta)$ is a transformation from
  $\mathbb{S}^{n-1}$ onto $\mathbb{S}^{n-1}$
  (cf. \cite[Section 2.5]{chen2018}) satisfying
\begin{eqnarray}\label{eq-2.2}
 |x-\zeta|=\frac{1-|x|^{2}}{|\eta-x|}
\quad\text{and}\quad
d\sigma(\zeta)=\frac{(1-|x|^2)^{n-1}}{|\eta-x|^{2n-2}}d\sigma(\eta)
 \end{eqnarray}
 (cf. \cite[Page 250]{mark}).
Combining  \eqref{eq-1.2} and \eqref{eq-2.2}, we get
 $$
 P_{h}^{q}(x,\zeta)d\sigma(\zeta)
=\frac{(1-|x|^2)^{(n-1)q}}{|x-\zeta|^{2(n-1)q}}d\sigma(\zeta)
= \frac{|x-\eta|^{2(n-1)(q-1)} }{(1-|x|^2)^{(n-1)(q-1)}} d\sigma(\eta).
$$
Therefore, for any  $q\in(1,\infty)$ and $x\in\mathbb{B}^{n}$ ,
$$
\int_{\mathbb{S}^{n-1}}P_{h}^{q}(x,\zeta)d\sigma(\zeta)
=\frac{1}{(1-|x|^2)^{(n-1)(q-1)}}
 \int_{\mathbb{S}^{n-1}}  |x-\eta|^{2(n-1)(q-1)}   d\sigma(\eta).
$$
 From this and \eqref{eq-2.1}, the lemma follows.
\end{proof}

For any $q\in(1,\infty)$ and $x\in\overline{\mathbb{B}}^{n}$,
let
\begin{eqnarray}\label{eq-2.3}
C_{q}(x)
= \int_{\mathbb{S}^{n-1}}  |x-\eta|^{2(n-1)(q-1)}   d\sigma(\eta).
\end{eqnarray}
For $C_{q}(x)$, we have the following result.
\begin{lemma}\label{lem-2.2}
For any $q\in(1,\infty)$ and $x\in\mathbb{B}^{n}$,
we have
\begin{eqnarray*}
C_{q}(x)
=\;_{2}F_{1}\left(-(n-1)(q-1),\frac{n}{2}+q-nq;\frac{n}{2};|x|^{2}\right).
\end{eqnarray*}
\end{lemma}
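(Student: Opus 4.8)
The plan is to evaluate the integral in \eqref{eq-2.3} by reducing it to a one–dimensional integral, identifying that integral as an Euler representation of a Gauss hypergeometric function, and then invoking the quadratic transformation \eqref{eq-1.6}. Since $d\sigma$ is rotation invariant, $C_{q}(x)$ depends only on $r=|x|$; the case $x=0$ is trivial because both sides reduce to $1$, so I may assume $0<r<1$. Setting $t=\langle x/|x|,\eta\rangle$ so that $|x-\eta|^{2}=1+r^{2}-2rt$, the polar (slice) integration formula on $\mathbb{S}^{n-1}$, normalized so that $\sigma(\mathbb{S}^{n-1})=1$, yields
$$
C_{q}(x)=\frac{\Gamma\!\big(\tfrac n2\big)}{\Gamma\!\big(\tfrac12\big)\,\Gamma\!\big(\tfrac{n-1}{2}\big)}\int_{-1}^{1}\big(1+r^{2}-2rt\big)^{(n-1)(q-1)}(1-t^{2})^{\frac{n-3}{2}}\,dt .
$$

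Next I would substitute $t=2u-1$ and use the factorisations $1+r^{2}-2rt=(1+r)^{2}\big(1-\tfrac{4ru}{(1+r)^{2}}\big)$ and $1-t^{2}=4u(1-u)$; after pulling out the constants this turns the right-hand side into
$$
C_{q}(x)=\frac{2\cdot 4^{\frac{n-3}{2}}\,\Gamma\!\big(\tfrac n2\big)}{\Gamma\!\big(\tfrac12\big)\,\Gamma\!\big(\tfrac{n-1}{2}\big)}\,(1+r)^{2(n-1)(q-1)}\int_{0}^{1}\Big(1-\tfrac{4ru}{(1+r)^{2}}\Big)^{(n-1)(q-1)}u^{\frac{n-3}{2}}(1-u)^{\frac{n-3}{2}}\,du .
$$
By \eqref{eq-1.4}, with $a=-(n-1)(q-1)$, $b=\tfrac{n-1}{2}$, $c=n-1$ (so $c>b>0$) and argument $\tfrac{4r}{(1+r)^{2}}\in[0,1)$, valid because $(1+r)^{2}-4r=(1-r)^{2}>0$, the last integral equals $\frac{\Gamma\!\big(\tfrac{n-1}{2}\big)^{2}}{\Gamma(n-1)}\,{}_{2}F_{1}\!\Big(-(n-1)(q-1),\tfrac{n-1}{2};n-1;\tfrac{4r}{(1+r)^{2}}\Big)$.

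Then I would apply the quadratic transformation \eqref{eq-1.6} with $a=-(n-1)(q-1)$ and $b=\tfrac{n-1}{2}$, for which $2b=n-1$ and the hypotheses $|r|<1$ and $\tfrac{4r}{(1+r)^{2}}<1$ hold. It replaces the argument $\tfrac{4r}{(1+r)^{2}}$ by $r^{2}=|x|^{2}$, introduces a factor $(1+r)^{-2(n-1)(q-1)}$ that cancels the $(1+r)^{2(n-1)(q-1)}$ above, and turns the parameters into precisely $-(n-1)(q-1)$, $a-b+\tfrac12=\tfrac n2+q-nq$, and $b+\tfrac12=\tfrac n2$, which is the claimed triple. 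Finally, the remaining constant
$$
\frac{2\cdot 4^{\frac{n-3}{2}}\,\Gamma\!\big(\tfrac n2\big)}{\Gamma\!\big(\tfrac12\big)\,\Gamma\!\big(\tfrac{n-1}{2}\big)}\cdot\frac{\Gamma\!\big(\tfrac{n-1}{2}\big)^{2}}{\Gamma(n-1)}
$$
must be shown to equal $1$; this follows from $\Gamma(\tfrac12)=\sqrt{\pi}$, $4^{(n-3)/2}=2^{n-3}$, and the Legendre duplication formula $\Gamma(n-1)=\tfrac{2^{n-2}}{\sqrt{\pi}}\,\Gamma\!\big(\tfrac{n-1}{2}\big)\Gamma\!\big(\tfrac n2\big)$. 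I expect the main obstacle to be this final bookkeeping — getting all the Gamma factors and powers of $2$ to collapse correctly, and checking the hypotheses of \eqref{eq-1.4} and \eqref{eq-1.6} at each step; the change of variables itself is routine.
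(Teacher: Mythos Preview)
Your proposal is correct and follows essentially the same route as the paper: rotational invariance reduces $C_q(x)$ to a one-variable integral, the substitution $u=\tfrac{1+t}{2}$ (the paper uses $t=\tfrac{1+\cos\theta_1}{2}$ after passing through spherical coordinates) puts it into the Euler form \eqref{eq-1.4} with parameters $a=-(n-1)(q-1)$, $b=\tfrac{n-1}{2}$, $c=n-1$, and then the quadratic transformation \eqref{eq-1.6} and the duplication formula $\sqrt{\pi}\,\Gamma(n-1)=2^{n-2}\Gamma(\tfrac{n-1}{2})\Gamma(\tfrac n2)$ finish the job. The only cosmetic difference is that you invoke the slice integration formula directly while the paper writes out the spherical coordinates; the key identities and the bookkeeping of constants are identical.
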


\begin{proof}
Let $A$ be an unitary transformation in $\mathbb{R}^{n}$.
For any $x\in\mathbb{B}^{n}$,
by replacing $\eta$ with $A\eta$ in \eqref{eq-2.3}, we get
\begin{eqnarray*}
C_{q}(Ax)
= \int_{\mathbb{S}^{n-1}}  |Ax-A\eta|^{2(n-1)(q-1)}   d\sigma(A\eta)
= \int_{\mathbb{S}^{n-1}}  |x-\eta|^{2(n-1)(q-1)}   d\sigma(\eta)
=C_{q}(x).
\end{eqnarray*}
Now, we choose a  suitable $A$ such that $Ax=|x|e_{n}$,
where $e_{n}=(0,\ldots,0,1)\in\mathbb{S}^{n-1}$.
Then we have
\begin{eqnarray}\label{eq-2.4}
C_{q}(x)=C_{q}(|x|e_{n}).
\end{eqnarray}
Hence, to prove the lemma is sufficient to estimate the quality $C_{q}(\rho e_{n})$, where $\rho=|x|\in[0,1)$.

Using spherical coordinates transformation (cf. \cite[Equation (2.2)]{chen2018}), we get
\begin{eqnarray}\label{eq-2.5}
& &C_{q}(\rho e_{n})\\\nonumber
&= &\frac{1}{\omega_{n-1}}
 \int_{0}^{\pi} (1+\rho ^{2}- 2\rho\cos\theta_{1})^{(n-1)(q-1)}  \sin^{n-2}\theta_{1} d\theta_{1}\\\nonumber
 &&\times \int_{0}^{\pi} \sin^{n-3}\theta_{2}d\theta_{2}\cdots \int_{0}^{\pi} \sin \theta_{n-2}d\theta_{n-2} \int_{0}^{2\pi}d\theta_{n-1}\\\nonumber
&= &\frac{\omega_{n-2}}{\omega_{n-1}}
 \int_{0}^{\pi} (1+\rho ^{2}- 2\rho\cos\theta_{1})^{(n-1)(q-1)}  \sin^{n-2}\theta_{1} d\theta_{1}\\\nonumber
&=&(1+\rho)^{2(n-1)(q-1)}\frac{\omega_{n-2}}{\omega_{n-1}}
 \int_{0}^{\pi} \left(1- \frac{2\rho(1+\cos\theta_{1})}{(1+\rho)^{2}}\right)^{(n-1)(q-1)}  \sin^{n-2}\theta_{1} d\theta_{1},
\end{eqnarray}
where $\omega_{n-1}=\frac{2\pi^{n/2}}{\Gamma(n/2)}$ denotes the $(n-1)$-dimensional Lebesgue measure on $\mathbb{S}^{n-1}$ and
\begin{eqnarray}\label{eq-2.6}
\frac{\omega_{n-2}}{\omega_{n-1}}
=\frac{1}{ \int_{0}^{\pi} \sin^{n-2}\theta_{1}d\theta_{1}}
=\frac{\Gamma(\frac{n}{2})}{\sqrt{\pi}\Gamma(\frac{n-1}{2})}.
\end{eqnarray}
For any $\rho\in[0,1)$ and $\theta_{1}\in(0,\pi)$,
let
\begin{eqnarray}\label{eq-2.7}
s=\frac{4\rho}{(1+\rho)^{2}}\in[0,1)
\quad\text{and}\quad
t=\frac{1+\cos\theta_{1}}{2}\in(0,1).
\end{eqnarray}
By elementary calculations, we have that
\begin{eqnarray}\label{eq-2.8}
\sin\theta_{1}=2t^{\frac{1}{2}}(1-t)^{\frac{1}{2}}
\quad
\text{and}\quad
d\theta_{1}=-\frac{2}{\sin\theta_{1}} dt.
\end{eqnarray}
Then \eqref{eq-1.4}, \eqref{eq-1.6}, \eqref{eq-2.7} and \eqref{eq-2.8} imply that
\begin{eqnarray}\label{eq-2.9}
 &&\int_{0}^{\pi} \left(1- \frac{2\rho(1+\cos\theta_{1})}{(1+\rho)^{2}}\right)^{(n-1)(q-1)}  \sin^{n-2}\theta_{1} d\theta_{1}\\\nonumber
&=&
 2^{n-2}\int_{0}^{1} (1-st)^{(n-q)(q-1)}t^{\frac{n-3}{2}}(1-t)^{\frac{n-3}{2}}dt\\\nonumber
 &=&
 2^{n-2}\frac{\big(\Gamma(\frac{n-1}{2}) \big)^{2}}{\Gamma(n-1)}
  \;_{2}F_{1}\left(-(n-1)(q-1),\frac{n-1}{2};n-1;\frac{4\rho}{(1+\rho)^2} \right)\\\nonumber
  &=&
 2^{n-2}\frac{\big(\Gamma(\frac{n-1}{2}) \big)^{2}}{\Gamma(n-1)}
(1+\rho)^{-2(n-1)(q-1)}
\;_{2}F_{1}\left(-(n-1)(q-1),\frac{n}{2}+q-nq;\frac{n}{2};\rho^{2}\right).
\end{eqnarray}
Since $\sqrt{\pi}\Gamma(n-1)=2^{n-2}\Gamma(\frac{n-1}{2})\Gamma(\frac{n}{2})$ (cf. \cite[Page 23]{rain}), then by \eqref{eq-2.4}, \eqref{eq-2.5}, \eqref{eq-2.6} and \eqref{eq-2.9}, we get
$$
C_{q}(x)
=C_{q}(|x| e_{n})
=
\;_{2}F_{1}\left(-(n-1)(q-1),\frac{n}{2}+q-nq;\frac{n}{2};|x|^{2}\right),
$$
as required.
\end{proof}

\begin{lemma}\label{lem-2.3}
For any $q\in(1,\infty)$ and $x\in\mathbb{B}^{n}$,
we have
$$
\sup_{x\in\mathbb{B}^{n}}C_{q}(x)
=C_{q}(e_{n})=
\;_{2}F_{1}\left(-(n-1)(q-1),\frac{n}{2}+q-nq;\frac{n}{2};1\right).
$$
\end{lemma}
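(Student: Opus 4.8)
The plan is to show that $C_q(x)$, viewed as a function of $\rho=|x|\in[0,1)$ via Lemma~\ref{lem-2.2}, is monotonically increasing in $\rho$, so that the supremum over $\mathbb{B}^n$ is attained in the limit $\rho\to 1^-$, and then to identify that limit with the value of the hypergeometric series at $1$. First I would go back to the integral representation \eqref{eq-2.3}, namely $C_q(\rho e_n)=\int_{\mathbb{S}^{n-1}}|x-\eta|^{2(n-1)(q-1)}\,d\sigma(\eta)$ with $x=\rho e_n$; after the spherical-coordinate reduction already carried out in \eqref{eq-2.5}, this is a constant multiple of $\int_0^\pi (1+\rho^2-2\rho\cos\theta)^{(n-1)(q-1)}\sin^{n-2}\theta\,d\theta$. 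Since $q>1$, the exponent $(n-1)(q-1)>0$, and for each fixed $\theta\in(0,\pi)$ the map $\rho\mapsto 1+\rho^2-2\rho\cos\theta=(\rho-\cos\theta)^2+\sin^2\theta$ is increasing on $[\cos\theta,1)$; more to the point, on all of $[0,1)$ one checks $\frac{d}{d\rho}(1+\rho^2-2\rho\cos\theta)=2(\rho-\cos\theta)$, which is not always nonnegative, so a pointwise-in-$\theta$ monotonicity argument needs a little care near small $\theta$. The cleanest route is instead to differentiate under the integral sign in the hypergeometric form and use \eqref{eq-1.5}: writing $C_q(x)={}_2F_1(a,b;c;\rho^2)$ with $a=-(n-1)(q-1)<0$, $b=\tfrac n2+q-nq$, $c=\tfrac n2>0$, we get $\frac{d}{d\rho}C_q=2\rho\cdot\frac{ab}{c}\,{}_2F_1(a+1,b+1;c+1;\rho^2)$, and then one argues that this derivative is nonnegative by inspecting the sign of $ab$ together with positivity/monotonicity of the shifted series — but the signs of $a$ and $b$ depend on $n$ and $q$, so this too is not immediate.

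Given these sign subtleties, I would actually prefer to prove monotonicity directly at the level of the original spherical integral by a symmetrization/pairing trick: pair the contribution of $\theta$ with that of $\pi-\theta$. Concretely, group $\sin^{n-2}\theta\,d\theta$ over $\theta$ and $\pi-\theta$ and use the elementary inequality that for $0<\rho<\rho'<1$ the sum $(1+\rho^2-2\rho c)^m+(1+\rho^2+2\rho c)^m$ is increasing in $\rho$ for each $c\in[0,1]$ and each $m>0$ — this follows because, with $A=1+\rho^2$ and $B=2\rho c$, the function $(A-B)^m+(A+B)^m$ is increasing in $A$ and, along the curve $(A,B)=(1+\rho^2,2\rho c)$, both $A$ and $|B|$ increase with $\rho$ while convexity of $t\mapsto t^m$ for $m\ge 1$ (or concavity handling for $0<m<1$, done by the same pairing) makes the even part dominate. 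Thus $C_q(\rho e_n)$ is nondecreasing in $\rho$ on $[0,1)$, hence $\sup_{\mathbb{B}^n}C_q(x)=\lim_{\rho\to 1^-}C_q(\rho e_n)$.

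It then remains to evaluate the limit and to justify equating it with $C_q(e_n)$. For the series side, note that the parameters satisfy $c-a-b=\tfrac n2-\bigl(-(n-1)(q-1)\bigr)-\bigl(\tfrac n2+q-nq\bigr)=(n-1)(q-1)+nq-q=(n-1)q>0$, so by the criterion recalled right after the definition of ${}_2F_1$ in the introduction, the series ${}_2F_1(a,b;c;\rho^2)$ converges absolutely and uniformly for $\rho^2\le 1$; in particular it is continuous up to $\rho=1$ and $\lim_{\rho\to1^-}C_q(\rho e_n)={}_2F_1\bigl(-(n-1)(q-1),\tfrac n2+q-nq;\tfrac n2;1\bigr)$. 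For the geometric side, by monotone convergence (the integrand $|\rho e_n-\eta|^{2(n-1)(q-1)}$ increases pointwise a.e.\ in $\eta$ as $\rho\uparrow 1$, along the paired subsequence argument above, and is dominated by $2^{2(n-1)(q-1)}$) we also have $\lim_{\rho\to1^-}C_q(\rho e_n)=\int_{\mathbb{S}^{n-1}}|e_n-\eta|^{2(n-1)(q-1)}\,d\sigma(\eta)=C_q(e_n)$, the last equality being the definition \eqref{eq-2.3} extended to $x\in\overline{\mathbb{B}}^n$. Combining the two identifications gives $\sup_{x\in\mathbb{B}^n}C_q(x)=C_q(e_n)={}_2F_1\bigl(-(n-1)(q-1),\tfrac n2+q-nq;\tfrac n2;1\bigr)$, which is the claim.

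The main obstacle is the monotonicity step: the naive ``differentiate under the integral'' argument is hampered by the fact that $\rho-\cos\theta$ changes sign, and the hypergeometric derivative formula \eqref{eq-1.5} produces a coefficient $ab/c$ whose sign is not uniformly positive across all admissible $n\ge 3$ and $q>1$. Making the $\theta\leftrightarrow\pi-\theta$ pairing rigorous — in particular handling the two regimes $m=(n-1)(q-1)\ge 1$ and $0<m<1$ uniformly, and extracting genuine monotonicity rather than just an inequality between endpoints — is the delicate part; everything after that (convergence of the series since $c-a-b>0$, and monotone convergence on the sphere) is routine.
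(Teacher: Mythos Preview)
Your overall plan --- prove that $\rho\mapsto C_q(\rho e_n)$ is nondecreasing on $[0,1)$ and then pass to the limit $\rho\to 1^-$ using $c-a-b>0$ --- is exactly the paper's strategy. The gap is in the monotonicity step for $0<m<1$, where $m=(n-1)(q-1)$, i.e.\ for $q\in(1,1+\tfrac{1}{n-1})$. Your key claim, that for every $c\in[0,1]$ and every $m>0$ the map
\[
\rho\;\longmapsto\;(1+\rho^2-2\rho c)^m+(1+\rho^2+2\rho c)^m
\]
is increasing on $[0,1)$, is \emph{false} when $0<m<1$. Take $m=0.1$ and $c=0.9$: at $\rho=0$ the sum equals $2$, while at $\rho=0.1$ one computes $(0.83)^{0.1}+(1.19)^{0.1}\approx 1.9991<2$. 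The heuristic you give (``$(A-B)^m+(A+B)^m$ is increasing in $A$, and both $A$ and $|B|$ increase'') breaks precisely because for $0<m<1$ the function $(A-B)^m+(A+B)^m$ is \emph{decreasing} in $B$ at fixed $A$, so the two effects compete. Since this failure occurs on a set of $\theta$ of positive measure, integrating against $\sin^{n-2}\theta\,d\theta$ does not rescue the argument pointwise.

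The paper makes exactly this case split. For $m\ge 1$ it differentiates the spherical integral and uses a pairing (equivalent to yours) to show $\frac{d}{d\rho}C_q(\rho e_n)\ge 0$; your argument is fine there. For $0<m<1$ the paper returns to the hypergeometric route you dismissed: applying \eqref{eq-1.5} gives
\[
\varphi'(\rho)=\frac{ab}{c}\,{}_2F_1(a+1,b+1;c+1;\rho),
\]
with $a=-(n-1)(q-1)$, $b=\tfrac n2+q-nq$, $c=\tfrac n2$. Here $ab/c>0$ since both $a$ and $b$ are negative, and the crucial point you missed is that in this range $c+1>a+1>0$ (because $(n-1)(q-1)<1$), so after swapping the first two arguments the Euler integral representation \eqref{eq-1.4} applies and exhibits ${}_2F_1(b+1,a+1;c+1;\rho)$ as a positive integral. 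That closes the case $0<m<1$. (Minor aside: your computation of $c-a-b$ is off --- it equals $(n-1)(2q-1)$, not $(n-1)q$ --- but the positivity conclusion is unaffected.)
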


\begin{proof}
For any $q\in(1,\infty)$, since $\frac{n}{2}+(n-1)(q-1)-(\frac{n}{2}+q-nq)>0$, then the series $\;_{2}F_{1}\left(-(n-1)(q-1),\frac{n}{2}+q-nq;\frac{n}{2};\rho^{2}\right)$ is both absolutely convergent and continuous with respect to $\rho$ in $[0,1]$  (cf. \cite[Section 31]{rain}).
In the following, we divide the proof into two cases according to the value of $q$.
\begin{case} $q\in(1,1+\frac{1}{n-1})$.
\end{case}
In this case, we let
\begin{eqnarray}\label{eq-2.10}
\varphi(\rho)=\;_{2}F_{1}\left(-(n-1)(q-1),\frac{n}{2}+q-nq;\frac{n}{2};\rho\right) \end{eqnarray}
in $[0.1]$.
By Lemma \ref{lem-2.2}, we see that it suffices to prove  $\max_{\rho\in[0,1]}\varphi(\rho)=\varphi(1)$.

Using the formula \eqref{eq-1.5},
we get
\[\begin{split}
&\varphi'(\rho)\\
=& -\frac{2(n-1)(q-1)}{n}\left(\frac{n}{2}+q-nq\right)
\;_{2}F_{1}\left(1-(n-1)(q-1),\frac{n}{2}+q-nq+1;\frac{n}{2}+1;\rho\right)\\
=& -\frac{2(n-1)(q-1)}{n}\left(\frac{n}{2}+q-nq\right)
\;_{2}F_{1}\left(\frac{n}{2}+q-nq+1 ,1-(n-1)(q-1);\frac{n}{2}+1;\rho\right).
\end{split}\]
By the assumption $q\in(1,1+\frac{1}{n-1})$, we know that $$\frac{n}{2}+1>1-(n-1)(q-1)>0.$$
Then it follows from \eqref{eq-1.4} that
\begin{eqnarray*}
&&\;_{2}F_{1}
\left(\frac{n}{2}+q-nq+1 ,1-(n-1)(q-1);\frac{n}{2}+1;\rho\right)\\
&=&\frac{\Gamma(\frac{n}{2}+1)}{\Gamma\big(1-(n-1)(q-1)\big)\Gamma\big(\frac{n}{2}+(n-1)(q-1)\big)}
\int_{0}^{1}\frac{t^{-(n-1)(q-1)}(1-t)^{\frac{n}{2}+(n-1)(q-1)-1}}{(1-t\rho)^{\frac{n}{2}+q-nq+1}}dt\\
&>&0.
\end{eqnarray*}
Further, because
$\frac{n}{2}+q-nq<0$,
we obtain from these equalities that $\varphi'(\rho)\geq 0$ in $(0,1)$.
Hence,
$$
\max_{\rho\in[0,1]}\varphi(\rho)=\varphi(1).
$$
This, together with \eqref{eq-2.10} and Lemma \ref{lem-2.2},
 shows that
 $\sup_{x\in\mathbb{B}^{n}}C_{q}(x)
=C_{q}(e_{n})$
for any $q\in(1,1+\frac{1}{n-1})$, which is what we need.
\begin{case}
$q\in[1+\frac{1}{n-1},\infty)$.
\end{case}
For any $\rho\in(0,1)$,
it follows from \eqref{eq-2.5} that
\begin{eqnarray*}
& &\frac{\omega_{n-1}}{\omega_{n-2}}\cdot\frac{d}{d\rho}C_{q}(\rho e_{n})\\\nonumber
&= &2(n-1)(q-1)
 \int_{0}^{\pi} \sin^{n-2}\theta_{1} (\rho-\cos\theta_{1}) (1+\rho ^{2}- 2\rho\cos\theta_{1})^{(n-1)(q-1)-1}  d\theta_{1}\\
&= &2(n-1)(q-1)\rho
 \int_{0}^{\pi} \sin^{n-2}\theta_{1}  (1+\rho ^{2}- 2\rho\cos\theta_{1})^{(n-1)(q-1)-1}  d\theta_{1}\\
&&-2(n-1)(q-1)
 \int_{0}^{\pi} \cos\theta_{1} \sin^{n-2}\theta_{1} (1+\rho ^{2}- 2\rho\cos\theta_{1})^{(n-1)(q-1)-1}  d\theta_{1}.
\end{eqnarray*}
For the last integral above,
basic calculations yield that
\[\begin{split}
& \int_{0}^{\pi} \cos\theta_{1} \sin^{n-2}\theta_{1} (1+\rho ^{2}- 2\rho\cos\theta_{1})^{(n-1)(q-1)-1}  d\theta_{1}\\\nonumber
=& -\int_{-\frac{\pi}{2}}^{\frac{\pi}{2}}
\sin\theta_{1} \cos^{n-2}\theta_{1} (1+\rho ^{2}+ 2\rho\sin\theta_{1})^{(n-1)(q-1)-1}  d\theta_{1}\\
=& \int_{0}^{\frac{\pi}{2}}
\sin\theta_{1} \cos^{n-2}\theta_{1}
\big( (1+\rho ^{2}- 2\rho\sin\theta_{1})^{(n-1)(q-1)-1}
- (1+\rho ^{2}+ 2\rho\sin\theta_{1})^{(n-1)(q-1)-1}
\big) d\theta_{1}\\
\leq&\;0.
\end{split}\]
Then we obtain
$\frac{d}{d\rho}C_{q}(\rho e_{n})
\geq0$,
which means that
$\max_{\rho\in[0,1]}C_{q}(\rho e_{n})
=C_{q}(e_{n})$.
This, together with \eqref{eq-2.4}, shows that
$\sup_{x\in\mathbb{B}^{n}}C_{q}(x)
=\max_{\rho\in[0,1]}C_{q}(\rho e_{n})
=C_{q}(e_{n})$ for any $q\in[1+\frac{1}{n-1},\infty)$.
The proof of the lemma is complete.
\end{proof}
\subsection{Proofs of Theorems \ref{thm-1.1} and \ref{thm-1.2}}
For any $q\in(1,\infty)$ and $x\in\mathbb{B}^{n}$,
by Lemmas \ref{lem-2.1}$\sim$\ref{lem-2.3},
we see that
\begin{eqnarray*}
|u(x)|
 \leq
 \frac{  C^{\frac{1}{q}}_{q}(x)}{(1-|x|^2)^{\frac{(n-1)(q-1)}{q}}}
\|\phi\|_{L^{p}},
\end{eqnarray*}
where
\begin{eqnarray*}
C_{q}(x)
&=&
\;_{2}F_{1}\left(-(n-1)(q-1),\frac{n}{2}+q-nq;\frac{n}{2};|x|^{2}\right)\\
&\leq&
\;_{2}F_{1}\left(-(n-1)(q-1),\frac{n}{2}+q-nq;\frac{n}{2};1\right).
\end{eqnarray*}
The above inequalities are sharp.
 \qed

\begin{remark}
In the case $n=3$, we can find a very explicit sharp point estimate.
For any $q\in(1,\infty)$ and $\rho\in(0,1)$, it follows from \eqref{eq-2.5} and \eqref{eq-2.6} that
\begin{eqnarray*}
 C_{q}(\rho e_{3})
&= &\frac{1}{2}
 \int_{0}^{\pi} (1+\rho ^{2}- 2\rho\cos\theta_{1})^{2(q-1)}  \sin\theta_{1} d\theta_{1}\\
&= &\frac{1}{2}
 \int_{-1}^{1} (1+\rho ^{2}- 2\rho x)^{2(q-1)}   dx\\
&= &\frac{(1+\rho)^{4q-2}-(1-\rho)^{4q-2}}{4(2q-1)\rho}.
\end{eqnarray*}
If $\rho=0$, we obtain from  \eqref{eq-2.5} that $ C_{q}(0)=1$.
Therefore, for any $q\in(1,\infty)$ and $x\in\mathbb{B}^{n}\backslash\{0\}$, by Lemma \ref{lem-2.1} and \eqref{eq-2.3}$\sim$\eqref{eq-2.5},
we have the following sharp inequality
\begin{eqnarray*}
|u(x)|
 \leq
 \frac{1}{(1-|x|^2)^{\frac{2q-2}{q}}}
\left(\frac{(1+|x|)^{4q-2}-(1-|x|)^{4q-2}}{4(2q-1)|x|}\right)^{\frac{1}{q}}
\|\phi\|_{L^{p}}.
\end{eqnarray*}
For $q\in(1,\infty)$ and $x=0$, we have $|u(x)|\leq \|\phi\|_{L^{p}}$.
\end{remark}

\vspace*{5mm}
\noindent{\bf Funding.}
 The first author is partially supported by NSFS of China (No. 11571216, 11671127 and 11801166),
 NSF of Hunan Province (No. 2018JJ3327), China Scholarship Council and the construct program of the key discipline in Hunan Province.

\end{document}